\documentclass[12pt]{amsart}



\usepackage{amssymb}

\usepackage{enumitem}

\usepackage{amsmath,amssymb,amsfonts,amsthm,color}
\usepackage{xcolor}
\usepackage{float}
\newtheorem{maintheorem}{Theorem}


\newcommand{\sing}{\textsf{Sing}}

\newcommand{\F}{\mathcal{F}}
\newcommand{\C}{\mathbb{C}}
\usepackage[dvips]{graphicx}
\usepackage{graphicx}
\usepackage{color}

\newcommand{\ze}{\mathbb{Z}}

\newcommand{\cl}[1]{\mathcal{#1}}

\newcommand{\cpt}[1]{\mathbb{C}P^{2}}

\newcommand{\D}{\mathcal{D}}
\newcommand{\B}{\mathcal{B}}
\newcommand{\Ftilde}{\tilde{\mathcal{F}}}
\newcommand{\G}{\mathcal{G}}

\newcommand{\sep}{\mbox{\rm Sep}}

\newcommand{\iso}{\mbox{Iso}}
\newcommand{\dic}{\mbox{Dic}}
\newcommand{\divv}{\mbox{\rm Div}}

\newcommand{\ord}{\textsf{ord}}

\newcommand{\val}{\text{Val}}

\makeatletter
\@namedef{subjclassname@2020}{%
  \textup{2020} Mathematics Subject Classification}
\makeatother

\usepackage[T1]{fontenc}


\newtheorem{theorem}{Theorem}[section]
\newtheorem{corollary}[theorem]{Corollary}

\newtheorem{proposition}[theorem]{Proposition}



\theoremstyle{definition}
\newtheorem{definition}[theorem]{Definition}
\newtheorem{remark}[theorem]{Remark}
\newtheorem{example}[theorem]{Example}



\numberwithin{equation}{section}


\frenchspacing

\textwidth=13.5cm
\textheight=23cm
\parindent=16pt
\oddsidemargin=-0.5cm
\evensidemargin=-0.5cm
\topmargin=-0.5cm





\begin{document}


\baselineskip=17pt


\title[On a Mattei-Salem theorem]{On a Mattei-Salem theorem}

\author[A. Fern\'andez-P\'erez]{Arturo Fern\'andez-P\'erez}
\address{Department of Mathematics\\ Federal University of Minas Gerais\\
Av. Pres. Ant\^onio Carlos, 6627 \\
CEP 31270-901, Pampulha, Belo Horizonte, Brazil}
\email{fernandez@ufmg.br}

\author[N. Saravia-Molina]{Nancy Saravia-Molina}
\address{Dpto. Ciencias - Secci\'{o}n Matem\'{a}ticas\\ Pontificia Universidad Cat\'{o}lica del Per\'{u}\\
Av. Universitaria 1801, San Miguel, Lima 32, Peru.}
\email{nsaraviam@pucp.edu.pe}

\date{}

\begin{abstract}
We investigate the relationship between the valuations of a germ of a singular foliation $\mathcal{F}$ on the complex plane and those of a balanced equation of separatrices for $\mathcal{F}$, extending a theorem by Mattei-Salem. Under certain conditions, we also derive inequalities involving the valuation, tangency excess, and degree of a holomorphic foliation $\mathcal{F}$ on the complex projective plane.
\end{abstract}
\dedicatory{In memory of Arkadiusz P\l oski}
\subjclass[2020]{Primary 32S65; Secondary 37F75}

\keywords{Dicritical holomorphic foliations, Second type holomorphic foliations, valuation, reduction of singularities}

\maketitle

\section{Introduction}
In \cite{mattei}, J.-F. Mattei and E. Salem formulated a theorem that characterizes germs of non-dicritical second type foliations (possibly formal) on $(\C^2,0)$ in terms of their algebraic multiplicity, valuation over components contained in the exceptional divisor of a reduction of singularities, and by an exact sequence of sheaves associated to these foliations. We recall that a foliation $\F$ is \textit{non-dicritical} if the number of local separatrices -- local irreducible invariant curves -- is finite, otherwise it is called \textit{dicritical}. A \textit{second type foliation} is a foliation $\F$ that admits -- at most -- \textit{non-tangent saddle-nodes}, meaning that no weak separatrix is contained in the exceptional divisor of a reduction of singularities of $\F$. In Section \ref{basic}, we review some of the standard facts on foliations, singularities, and their separatrices. 
\par In this paper, we study the \textit{valuation} $\nu_D(\F)$ of a germ of a singular foliation $\F$ at $p\in\C^2$ along a component $D$ contained in the exceptional divisor $\D$ of a minimal reduction of singularities of $\F$ (see Definition \ref{defi1}). Our aim is to generalize \cite[Theorem 3.1.9, item (4)]{mattei} to an arbitrary foliation (dicritical or not). It is worth pointing out that Y. Genzmer in \cite[Lemma 3.2]{genzmer} extended \cite[Theorem 3.1.9, item (4)]{mattei} to dicritical second type foliations (see also \cite[Proposition 3.7]{mol2}). Both, Mattei-Salem and Genzmer used this valuation to solve the \textit{realization problem} for holomorphic foliations, see for example \cite[Theorems 1.1 and 1.2]{genzmer}.
\par To state our main result, we introduce $\xi_D(\F)$ in (\ref{def_x}) and (\ref{def-xi}). This number is associated with the sum of the \textit{tangency excess} of $\F$ at \textit{infinitely near points} of $D$. Sections \ref{tangency} and \ref{valuation} provide a detailed exposition of these definitions. 
\par We can now formulate our main result:
\begin{maintheorem}\label{teo1}
Let $\F$ be a germ of a singular foliation at $p\in\C^2$ having $\hat{\Psi}$ as a balanced equation of separatrices. Let $\pi:(\tilde{X},\D)\to(\C^2,p)$ be a minimal process of reduction of singularities for $\F$. Then, for every component $D\subset\D$, we have
\begin{equation}
\nu_D(\hat{\Psi})=
\begin{cases}
\nu_D(\F)+1-\xi_D(\F) & \text{if $D$ is non-dicritical};
\medskip \\
\nu_D(\F) -\xi_D(\F) & \text{if $D$ is dicritical}.
\end{cases}
\end{equation}
\end{maintheorem}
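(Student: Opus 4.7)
The plan is to prove the formula by induction on the length $N$ of the minimal reduction of singularities $\pi$. The base case $N=1$ corresponds to $\F$ being already reduced after a single blow-up at $p$, so $D$ is the unique exceptional component. In this case one verifies the formula by writing $\F$ and $\hat{\Psi}$ in local coordinates, computing the orders of vanishing of $\sigma^{*}\omega$ and $\sigma^{*}\hat{\Psi}$ along $D$, and treating the dicritical and non-dicritical cases separately. The extra ``$+1$'' in the non-dicritical case reflects the fact that an invariant exceptional divisor absorbs an additional factor of its defining equation in the pullback of $\omega$, while the tangency excess $\xi_D(\F)$ measures the discrepancy between the naive multiplicity of $\sigma^{*}\omega$ along $D$ and the actual one.

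For the inductive step, write $\pi = \sigma_{1} \circ \pi'$, where $\sigma_{1} : \tilde{X}_{1} \to \C^{2}$ is the first blow-up at $p$ with exceptional divisor $E_{1}$, and $\pi'$ is the remainder of the reduction, applied to $\F_{1} := \sigma_{1}^{*}\F$. If $D = E_{1}$, the conclusion follows from the base case. Otherwise $D$ is created by $\pi'$ over a point $q \in E_{1}$, and the germ of $\F_{1}$ at $q$ admits its own balanced equation of separatrices, call it $\hat{\Psi}_{q}$; this is obtained from the strict transform of $\hat{\Psi}$ by correcting for $E_{1}$, in particular by including $E_{1}$ as a local separatrix with multiplicity one when $E_{1}$ is non-dicritical at $q$, and by distributing the dicritical factors appropriately when $E_{1}$ is dicritical. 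The minimal reduction of $\F_{1}$ at $q$ has length strictly less than $N$, so the inductive hypothesis yields the desired formula for $\F_{1}$ along $D$.

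To finish, one compares $\nu_{D}(\hat{\Psi})$, $\nu_{D}(\F)$, and $\xi_{D}(\F)$ with the corresponding quantities for $\F_{1}$ and $\hat{\Psi}_{q}$. Under the blow-up $\sigma_{1}$, each of the three quantities changes by a term of the form $k_{D} \cdot (\text{contribution at } p)$, where $k_{D} \geq 1$ is the multiplicity with which $D$ appears in the total transform of $E_{1}$ under $\pi'$. Substituting these transformation rules into the inductive hypothesis for $\F_{1}$ at $q$ and applying the base-case relation at $p$ between $\nu_{p}(\hat{\Psi})$, $\nu_{p}(\F)$, and the tangency excess at $p$ then produces the formula for $\F$ along $D$.

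The main obstacle is the construction of $\hat{\Psi}_{q}$ and the verification that it is a balanced equation of $\F_{1}$ at $q$ in the sense used throughout the paper. In particular, when $E_{1}$ is dicritical, the multiplicities of dicritical separatrices of $\F$ at $p$ must be distributed correctly among the balanced equations at the singular points of $\F_{1}$ lying on $E_{1}$, and the balance conditions on the indices at the reduced singularities must be preserved after blow-up. A secondary subtlety is the additivity of $\xi_{D}(\F)$ over the blow-up tree: this should follow from its definition as a sum of tangency excesses at infinitely near points, but requires a careful accounting to ensure that the contributions from $p$, from $q$, and from the remaining infinitely near points of $D$ with respect to $\F_{1}$ sum up in exactly the way needed to match the proposed formula.
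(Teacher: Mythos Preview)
Your strategy differs from the paper's in a structurally important way: you induct on the total length $N$ of the reduction and peel off the \emph{first} blow-up $\sigma_1$, while the paper inducts on the height $k$ of the specific component $D$ and peels off the \emph{last} blow-up $\pi_k$ creating $D$. The paper's recursive definition of $\xi_D(\F)$ in (\ref{def-xi}) is written precisely for this top-down scheme: it expresses $\xi_D$ in terms of $\xi_{q_{k-1}}(\sigma_{k-1}^*\F)$ and the $\xi_{D'}(\F)$ for $D'\in V(q_{k-1})$, so the inductive step collapses immediately once one applies Proposition~\ref{prop:Equa-Ba} at $q_{k-1}$ and the induction hypothesis to the lower-height components $D'$. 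By contrast, your bottom-up approach needs the identity $\xi_D(\F)=\xi_D^{(q)}(\F_1)+m_D\cdot\xi_p(\F)$, where $\xi_D^{(q)}(\F_1)$ is computed relative to the reduction of $\F_1$ at $q$ (which does \emph{not} see $E_1$) and $m_D$ is the order of $(\pi')^*h_1$ along $D$. This is not the definition of $\xi_D$ and is not obvious from it; proving it amounts to unfolding the recursion (\ref{def-xi}) and tracking how many times $E_1$ enters $V(q_j)$ along the chain of infinitely near points, which is essentially the paper's computation done in reverse.

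Two further points. First, your description of the base case is incorrect: $\xi_D(\F)=\xi_p(\F)$ is \emph{not} ``the discrepancy between the naive multiplicity of $\sigma^*\omega$ along $D$ and the actual one''; it is defined via tangent saddle-nodes in the full reduction (Definition~\ref{defisecond}), and the base case is exactly Proposition~\ref{prop:Equa-Ba}, which is a nontrivial input you do not cite. Second, you have inverted the difficulty of the two obstacles. Constructing $\hat\Psi_q$ from the strict transform of $\hat\Psi$ is the easy part---in the paper it is one line: multiply by the local equations of the non-dicritical components through $q$, and do nothing for the dicritical ones (no ``distribution'' of dicritical factors is required, because the balanced condition already localizes correctly). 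The genuine work lies entirely in the $\xi_D$ bookkeeping you label a ``secondary subtlety.'' As written, your proposal does not close that gap.
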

\par Since $\F$ is of second type if and only if $\xi_p(\F)=0$ (see Definition \ref{defisecond}), Theorem \ref{teo1} generalizes \cite[Th\'eor\`em 3.1.9, item (4)]{mattei} and \cite[Lemma 3.2]{genzmer}, as $\xi_D(\F)=0$ by Definition \ref{def-2ndclass}. 
Finally, in Section \ref{projective}, using \cite[Theorem 1]{mendes} for projective foliations, we will derive inequalities involving $\nu_D(\F)$, $\xi_D(\F)$, and the degree of a holomorphic foliation $\F$ on $\mathbb{P}^2_{\C}$.

\section{Basic tools}\label{basic}

\par Let $\F$ be a germ of a singular foliation (possibly formal) at $p\in\C^2$. In local coordinates $(x,y)\in\C^ 2$ centered at $p$, the foliation is represented by a germ of a 1-form
\begin{equation}
\label{vectorfield}
\omega=P(x,y)dx+Q(x,y)dy,
\end{equation}
or by its dual vector field
\begin{equation}
\label{oneform}
v = -Q(x,y)\frac{\partial}{\partial{x}} + P(x,y)\frac{\partial}{\partial{y}},
\end{equation}
where  $P, Q   \in {\mathbb C}[[x,y]]$ are relatively prime.  The \textit{algebraic multiplicity} $\nu_p(\F)$ is the minimum of the order $\nu_p(P)$, $\nu_p(Q)$ at $p$ of the coefficients of a local generator of $\F$. 

\par Let $f(x,y)\in  \mathbb{C}[[x,y]]$. We say that $C: f(x,y)=0$  is {\em invariant} by $\F$ if $$\omega \wedge d f=(f\cdot h) dx \wedge dy,$$ for some  $h\in \mathbb{C}[[x,y]]$. If $C$ is irreducible, then we will say that $C$ is a {\em separatrix} of $\F$. The separatrix $C$ is analytical if $f$ is convergent.  We denote by $\sep_p(\F)$ the set of all separatrices of $\F$. 
\par We say that $p\in\C^2$ is a \textit{reduced} singularity for $\F$ if the linear part $\text{D}v(p)$ of the vector field $v$ in (\ref{oneform}) is non-zero and has eigenvalues $\lambda_1,\lambda_2\in\C$ fitting in one of the cases:
\begin{enumerate}
\item[(i)] $\lambda_1\lambda_2\neq 0$ and $\lambda_1\lambda_2\not\in\mathbb{Q}^{+}$ (\textit{non-degenerate});
\item[(ii)] $\lambda_1\neq 0$ and $\lambda_2\neq 0$ (\textit{saddle-node singularity}).
\end{enumerate}
In the case $(i)$, there is a system of coordinates  $(x,y)$ in which $\F$ is defined by the equation
\begin{equation}
\label{non-degenerate}
\omega=x(\lambda_1+a(x,y))dy-y(\lambda_2+b(x,y))dx,
\end{equation}
where $a(x,y),b(x,y)  \in {\mathbb C}[[x,y]]$ are non-units, so that  $\sep_p(\F)$ is formed by two
transversal analytic branches given by $\{x=0\}$ and $\{y=0\}$. In the case $(ii)$, up to a formal change of coordinates, the  saddle-node singularity is given by a 1-form of the type
\begin{equation}
\label{saddle-node-formal}
\omega = x^{k+1} dy-y(1 + \lambda x^{k})dx,
\end{equation}
where $\lambda \in \mathbb{C}$ and $k \in \mathbb{Z}^{+}$ are invariants after formal changes of coordinates (see \cite[Proposition 4.3]{martinetramis}).
The curve $\{x=0\}$   is an analytic separatrix, called {\em strong} separatrix, whereas $\{y=0\}$  corresponds to a possibly formal separatrix, called {\em weak} separatrix. The integer $k+1$ is called the  \textit{tangency index} of $\F$ with respect to the weak separatrix. 

 \par For a fixed minimal reduction process of singularities $\pi:(\tilde{X},\D)\to(\C^2,p)$ of $\F$ (it always exists, as established by Seidenberg \cite{seidenberg}), a component  $D \subset \D$ can be:
\begin{itemize}
\item {\em non-dicritical} if $D$ is $\tilde{\F}$-invariant. In this case, $D$ contains a finite number of simple singularities. Each non-corner singularity carries a separatrix   transversal to $D$, whose projection by $\pi$ is a curve in $\sep_{p}(\cl{F})$.
\item {\em dicritical} if $D$ is not $\tilde{\F}$-invariant. The definition
of reduction of singularities gives that $D$ may intersect only non-dicritical components and that $\tilde{\F}$ is everywhere transverse to $D$. The $\pi$-image of a local leaf of $\tilde{\F}$ at each non-corner point of $D$ belongs to $\sep_{p}(\cl{F})$.
\end{itemize}
\par Let $\sigma$ be the blow-up of the reduction process $\pi$ of $\F$ that generated the component $D\subset\D$. We will say that $\sigma$ is \textit{non-dicritical} (respectively \textit{dicritical}) if $D$ is non-dicritical (respectively dicritical).

\par Denote by   $\sep_{p}(D) \subset \sep_{p}(\cl{F})$ the set of separatrices whose strict transforms
 by $\pi$ intersect the
component $D \subset \D$. If $B \in \sep_{p}(D)$ with $D$ non-dicritical, $B$ is said to be \textit{isolated}. Otherwise, it is said to be a \textit{dicritical separatrix}.
This engenders the   decomposition $\sep_{p}(\cl{F}) = \iso_{p}(\cl{F}) \cup \dic_{p}(\cl{F})$, where notations are self-evident.
The set $\iso_{p}(\cl{F})$  is finite and  contains all   purely
formal separatrices. It   subdivides further    in two classes:
 \textit{weak} separatrices --- those arising from the weak separatrices of saddle-nodes --- and \textit{strong} separatrices --- corresponding to strong separatrices
of saddle-nodes and separatrices of non-degenerate singularities. On the other hand, if non-empty, $\dic_{p}(\cl{F})$ is an infinite set of analytic
separatrices. A foliation  $\cl{F}$ is said to be {\em  dicritical}
  when $\sep_{p}(\cl{F})$ is infinite, which is equivalent to saying that $\dic_{p}(\cl{F})$ is non-empty. Otherwise, $\cl{F}$ is called {\em non-dicritical}.

Along the text, we would rather adopt the language of \textit{divisors} of formal curves.
More specifically, a \textit{divisor of separatrices} for a foliation $\F$ at $(\C^2,p)$ is
a formal sum
\[\B = \sum_{B \in \text{Sep}_{p}(\F)} a_{B} \cdot B \]
where the coefficients $a_{B} \in \ze$ are zero except for finitely many $B \in \sep_{p}(\F)$.
We denote by $\divv_{p}(\F)$ the set of all these divisors, which turns into a group with the canonical additive structure.
We follow  the usual terminology and notation:
\begin{itemize}
\item $\B \geq 0$ denotes an \textit{effective} divisor, one whose  coefficients are all  non-negative;
\item   there is a unique decomposition $\B = \B_{0} - \B_{\infty}$, where $\B_{0}, \B_{\infty} \geq 0$ are respectively the \textit{zero}
and \textit{pole} divisors of $\B$;
\item the \textit{algebraic multiplicity} of   $\B$ is
$\nu_{p}(\B)=\displaystyle\sum_{B \in \text{Sep}_{p}(\F)} a_{B}\cdot \nu_p(B)$.
\end{itemize}
Given a    formal meromorphic equation $\hat{\Psi}$, whose irreducible components define
 separatrices  $B_i$ with multiplicities $\nu_{i}$, we associate   the divisor
$ (\hat{\Psi}) = \sum_{i} \nu_{i} \cdot B_{i}$.
A curve of separatrices $\hat{C}$, associated to a reduced equation $\hat{\Psi}$, is identified to
the divisor $\hat{C} = (\hat{\Psi})$. Such an effective divisor is named \textit{reduced}, that is,
all coefficients are either $0$ or $1$. In general,  $\B \in \divv_{p}(\F)$  is reduced if both
$\B_{0}$ and $\B_{\infty}$ are reduced effective divisors.
A divisor $\B$ is said to be \textit{adapted} to a curve of separatrices $\hat{C}$ if $\B_0 - \hat{C} \geq 0$.
Finally, the usual intersection number for formal curves $C_1=\{g_1(x,y)=0\}$ and $C_2=\{g_2(x,y)=0\}$ at $(\C^2,p)$, defined by $i_p(C_1,C_2):=\dim_{\C}\frac{\C[[x,y]]}{(g_1,g_2)}$,
is canonically extended in a bilinear way to divisors of curves.

\section{Tangency excess of a foliation}\label{tangency}
\par Let $\F$ be a germ of a singular foliation at $(\C^2,p)$ with a minimal reduction process $\pi:(\tilde{X},\D)\to (\C^2,p)$ and let $\tilde{\F} = \pi^{*} \F$ be the strict transform foliation of $\F$.
A saddle-node  singularity $q \in \sing(\Ftilde)$ 
is said to be a \textit{tangent saddle-node} if  its   weak separatrix is contained in the exceptional divisor $\D$.
\par Let $B$ be an irreducible curve invariant by $\F$ at $p$. Suppose that $\{y=0\}$ is the tangent cone of $B$, then we may choose a primitive Puiseux parametrization 
$\gamma(t)=(t^n,\phi(t))$ at $p=(0,0)$ such that $n=\nu_p(B)$. The \textit{tangency index of $\F$ along $B$ at $p$} (or \textit{weak index} in \cite[page 1114]{Fernandez}) is
$$\displaystyle\operatorname{Ind}_p^{\omega}(\F):=\ord_{t}Q(\gamma(t)).$$
\par We have the following definition given by Mattei-Salem \cite{mattei} to the non-dicritical case and by Genzmer \cite{genzmer} to arbitrary foliations:

\begin{definition}
\label{def-2ndclass}
 A foliation   is  \textit{in the second class} or is \textit{of second type} if there
are no    tangent saddle-nodes in its reduction of singularities.
\end{definition}
Given a component $D \subset \D$, we denote by  $\rho(D)$ its multiplicity (following the notation of \cite[page 1424]{mol2}),    which coincides with
the algebraic multiplicity of a curve $\gamma$ at $(\mathbb{C}^{2},p)$ whose strict transform $\pi^{*} \gamma$ meets $D$ transversally
outside a corner of $\D$. The following invariant is a measure of the existence of tangent saddle-nodes in the reduction of singularities of a foliation:

\begin{definition}\label{defisecond}
{\rm
 The \emph{tangency excess} of $\F$ is defined as $\xi_p(\F)=0$, when $p$ is a reduced singularity, and, in the non-reduced case, as the number
\[\xi_{p}(\F)=\sum_{q \in \textsl{SN}(\F) }\rho(D_{q})(\text{Ind}_{q}^{w}(\Ftilde)  -1),\]
where  $\textsl{SN}(\F)$ stands for  the set of tangent saddle-nodes  on $\D$ and, if $q \in \textsl{SN}(\F)$, we denote by  $D_q$  the component of $E$ containing its weak separatrix and by
 $\text{Ind}_{q}^{w}(\Ftilde) > 1$ its weak index.
}\end{definition}
Of course, $\xi_{p}(\F) \geq 0$ and, by definition, $\xi_{p}(\F) = 0$ if and only if $\textsl{SN}(\F) = \emptyset$, that is, if and only if  $\F$ is of second type.

We recall the following object introduced in \cite{genzmer,mol2}:
\begin{definition}{\rm
\label{def-balanced-set}
  A \emph{balanced divisor of separatrices}  for $\F$  is a divisor of the form
\[ \B \ = \
\sum_{B\in {\rm Iso}_p(\F)} B+ \sum_{B\in {\rm Dic}_{p}(\F)}\ a_{B}  \cdot B,\]
where the coefficients $a_{B} \in \mathbb{Z}$  are  non-zero except for finitely many $B \in
\dic_{p}(\F)$, and for each  dicritical  component $D \subset \D$,
 the following equality holds:
\[\sum_{B \in {\text{Sep}_{p}(D)}}a_{B} = 2- \val(D).\]
  The integer $\mathrm{Val}(D)$ represents the valence of a component $D \subset \mathcal{D}$ in the reduction of singularities; that is, it is the number of components in $\mathcal{D}$ that intersect $D$ other than $D$ itself.
}\end{definition}

A balanced divisor $\B$ is called \textit{primitive} if, for every dicritical component $D \in \D$ and
every $B \in \sep_{p}(D)$, we have $-1  \leq a_{B} \leq 1$.
Recall that a balanced divisor $\B$ is \textit{adapted} to a curve of separatrices $C$ if $\B_{0} - C \geq 0$.
A \textit{balanced equation of separatrices} is a formal meromorphic function $\hat{\Psi}$ whose associated divisor is a balanced divisor of separatrices. A balanced equation is \textit{reduced}, \textit{primitive} or \textit{adapted} to a curve $C$ if the same is true for the underlying divisor.

The tangency excess measures the extent to which a balanced divisor of separatrices computes the
 algebraic multiplicity, as expressed in the following result \cite[Proposition 2.4]{genzmer} or \cite[Proposition 3.3]{mol2}:

\begin{proposition}
\label{prop:Equa-Ba} Let $\F$ be a germ of singular foliation at $(\C^2,p)$ with $\B$ is a balanced divisor of separatrices.   Denote by $\nu_{p}(\F)$ and $\nu_{p}(\B)$ their
algebraic multiplicities. Then
\begin{equation}\label{eq_segundo}
\nu_{p}(\F)=\nu_{p}(\B)-1+\xi_{p}(\F).
\end{equation}
Moreover,
\[\nu_{p}(\F)=\nu_{p}(\B)-1\]
if, and only if, $\F$ is a second type foliation.
\end{proposition}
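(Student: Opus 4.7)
The plan is to prove (\ref{eq_segundo}) by induction on the number $N$ of blow-ups in the minimal reduction of singularities $\pi$ of $\F$ at $p$, and then deduce the ``moreover'' assertion as an immediate consequence of Definition \ref{defisecond}.

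For the base case $N = 0$, the foliation $\F$ is already reduced at $p$, so $\xi_p(\F) = 0$ and $\nu_p(\F) = 1$. In both reduced configurations -- non-degenerate and saddle-node -- the foliation is non-dicritical and $\iso_p(\F)$ consists of exactly two smooth isolated separatrices $B_1$ and $B_2$, each with algebraic multiplicity $1$. Since $\dic_p(\F) = \emptyset$, the unique balanced divisor is $\B = B_1 + B_2$, whence $\nu_p(\B) = 2 = \nu_p(\F) + 1$, verifying (\ref{eq_segundo}).

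For the inductive step, let $\sigma:(X_1,D)\to(\C^2,p)$ be the first blow-up of $\pi$ and $\tilde{\F} = \sigma^*\F$ the strict transform. Enumerate the singularities of $\tilde{\F}$ lying on $D$ as $q_1,\ldots,q_m$; at each $q_i$ the minimal reduction of $\tilde{\F}$ uses strictly fewer than $N$ blow-ups. For each $i$, I would construct a balanced divisor $\tilde{\B}_i$ for $\tilde{\F}$ at $q_i$ compatible with $\B$ as follows: each strict transform of a separatrix appearing in $\B$ is assigned to the unique $\tilde{\B}_i$ that sees it, with the same coefficient; if $\sigma$ is non-dicritical, the local branch of $D$ at each non-corner singular point $q_i$ is included in $\tilde{\B}_i$ as an isolated separatrix with coefficient $1$; and if $\sigma$ is dicritical, the coefficients along $D$ in the $\tilde{\B}_i$ are inherited from the $a_B$ prescribed by $\B$. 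The inductive hypothesis then yields $\nu_{q_i}(\tilde{\F}) = \nu_{q_i}(\tilde{\B}_i) - 1 + \xi_{q_i}(\tilde{\F})$ for every $i$. Summing over $i$, the statement reduces to three summation identities: (a) a Noether-type formula expressing $\nu_p(\F)$ in terms of the $\nu_{q_i}(\tilde{\F})$, with the customary shift depending on whether $\sigma$ is non-dicritical or dicritical; (b) a separatrix-summation formula expressing $\nu_p(\B)$ in terms of the $\nu_{q_i}(\tilde{\B}_i)$; and (c) the additivity $\xi_p(\F) = \sum_i \xi_{q_i}(\tilde{\F})$, which is immediate from Definition \ref{defisecond} because tangent saddle-nodes and the intrinsic multiplicities $\rho(D_q)$ of exceptional components are unaffected by the choice of base point.

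The main obstacle will be the bookkeeping in (b) when $\sigma$ is dicritical: the balanced condition $\sum_{B \in \sep_p(D)} a_B = 2 - \val(D)$ must be matched against the analogous conditions for the $\tilde{\B}_i$ at future dicritical components, and the valence of $D$ in the full reduction must be distributed correctly across the future corner points of $D$ so that the combined contributions produce the desired shift by $1$. Once (a), (b), (c) are assembled with the summed inductive identity, (\ref{eq_segundo}) drops out. The ``moreover'' clause then follows immediately from Definition \ref{defisecond}: since $\xi_p(\F) = 0$ if and only if $\F$ is of second type, the equality $\nu_p(\F) = \nu_p(\B) - 1$ characterizes precisely the second-type foliations.
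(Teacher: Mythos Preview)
The paper does not prove Proposition~\ref{prop:Equa-Ba}; it is quoted from \cite[Proposition~2.4]{genzmer} and \cite[Proposition~3.3]{mol2}, so there is no in-paper argument to compare against. Judged on its own terms, however, your sketch has two genuine gaps.

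Claim (c) is false as stated. The multiplicity $\rho(D_q)$ is the vanishing order along $D_q$ of the pullback of the maximal ideal of the \emph{base point}, and it changes when the base point moves from $p$ to $q_i$. Concretely: blow up $p$ to get $D_1$, then $q_1\in D_1$ to get $D_2$, then the corner $c=D_1\cap D_2$ to get $D_3$. From $p$ one has $\rho^{(p)}(D_3)=\rho^{(p)}(D_1)+\rho^{(p)}(D_2)=2$, whereas from $q_1$ the point $c$ lies on the single exceptional component $D_2$ (since $D_1$ is not exceptional for $\tilde\F$ at $q_1$), so $\rho^{(q_1)}(D_3)=1$. A tangent saddle-node with weak separatrix on $D_3$ therefore contributes differently to $\xi_p(\F)$ and to $\xi_{q_1}(\tilde\F)$. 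Worse, a saddle-node whose weak separatrix is the strict transform of $D_1$ counts as tangent for $\F$ at $p$ but not for $\tilde\F$ at $q_i$. So $\xi_p(\F)\neq\sum_i\xi_{q_i}(\tilde\F)$ in general.

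Claim (a) is also not available. There is no formula of the shape $\nu_p(\F)=\sum_i\nu_{q_i}(\tilde\F)+\text{const}$ after a single blow-up. Take $\omega=(y^2-x^2y+x^3)\,dx+x^3\,dy$: here $\nu_p(\F)=2$ and the first blow-up is non-dicritical, yet $\tilde\F$ has exactly two singularities on $D$, each of multiplicity $1$, so $\sum_i\nu_{q_i}(\tilde\F)=2$, which matches neither $\nu_p(\F)$ nor $\nu_p(\F)+1$. The algebraic multiplicity of a foliation is governed by its lowest-order jet and simply does not decompose additively under one blow-up. The proofs in \cite{genzmer,mol2}, like the proof of Theorem~\ref{teo1} in this paper, instead track valuations $\nu_D$ along every exceptional component through the full resolution tree; that bookkeeping is what replaces the nonexistent Noether formula you invoke.
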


\section{Valuation of a foliation along a component of the exceptional divisor}\label{valuation}
In this section, we introduce our primary object of study and establish the main result of this paper.
\begin{definition}\label{defi1}
Let $\F$ be a germ of singular foliation at $(\C^2,p)$ and $\pi:(\tilde{X},\D)\to(\C^2,p)$ be a minimal process of reduction of singularities of $\F$. If $D\subset \D$ is a component, the \textit{valuation} of $\F$ along $D$, denoted by $\nu_D(\F)$, is the order of vanishing of $\pi^{*}(\omega)$ along $D$, where $\omega$ is any 1-form defining $\F$.
In the same way, if $\hat{\Psi}$ is a formal meromorphic function at $(\C^2,p)$, we define the \textit{valuation} of $\hat{\Psi}$ along $D$, denoted by $\nu_D(\hat{\Psi})$, as the order of vanishing of $\pi^{*}\hat{\Psi}:=\hat{\Psi}\circ\pi$ along $D$.
\end{definition}
\par Now, we introduce $\xi_D(\F)$ as follows: in the height 1 of the blowing-up process $\pi$, i.e., $D$ is the exceptional projective line arising from the one blowing-up at $p$, we set
\begin{equation}\label{def_x}
\xi_D(\F):=\xi_p(\F).
\end{equation}
If  $D=D_k$ is obtained at height $k\geq 2$ in the blowing-up process \[\pi=\pi_1\circ\pi_2\circ\ldots\circ\pi_k\circ\ldots: (\tilde{X},\D) \to
(\C^2,p),\] and
 $D=(\pi_{k})^{-1}(q_{k-1})$, we define
\begin{equation}\label{def-xi}
\xi_D(\F):=\xi_{q_{k-1}}(\sigma_{k-1}^{*}(\F))+\sum_{D_{q_{k-1}}\in V(q_{k-1})}\xi_{D_{q_{k-1}}}(\F).
\end{equation}
where $\sigma_{k-1}^{*}(\F)$ denotes the strict transform of the foliation $\F$ by $\sigma_{k-1}:=\pi_1\circ\ldots\circ\pi_{k-1}$, and
$V(q)$ refers to the set of irreducible components $D_q$ of $\D$ that contains $q$. In this case, $V(q)$ contains at most two components and $\mathcal{D}=\sigma^{-1}_{k-1}(p)$.
\begin{remark}\label{pull}
If $\hat{\Psi}=\frac{f}{g}$ is a germ of a formal meromorphic function at $p\in\C^2$ and $\pi$ is a blow-up at $p$ with excepcional divisor $D$, then 
the strict transform of $\hat{\Psi}$ by $\pi$ is
\[\hat{\Psi}_1:=\frac{\pi^{*}(\hat{\Psi})}{h^{\nu_p(f)-\nu_p(g)}},\]
where $\{h=0\}$ is a local equation of $D$. 
\end{remark}
\par Now, we establish the main result of this paper, which is a generalization of \cite[Th\'eor\`eme 3.1.9, item (4)]{mattei} and \cite[Lemma 3.2]{genzmer}.  
\begin{maintheorem}\label{teo1}
Let $\F$ be a germ of a singular foliation at $p\in\C^2$ having $\hat{\Psi}$ as a balanced equation of separatrices. Let $\pi:(\tilde{X},\D)\to(\C^2,p)$ be a minimal process of reduction of singularities for $\F$. Then, for every component $D\subset\D$, we have
\begin{equation}\label{eq_teo}
\nu_D(\hat{\Psi})=
\begin{cases}
\nu_D(\F)+1-\xi_D(\F) & \text{if $D$ is non-dicritical};
\medskip \\
\nu_D(\F) -\xi_D(\F) & \text{if $D$ is dicritical}.
\end{cases}
\end{equation}
\end{maintheorem}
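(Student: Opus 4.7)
The plan is to argue by induction on the height $k$ of the component $D$ in the reduction tree. For the base case $k=1$, we have $\nu_D(\hat{\Psi})=\nu_p(\hat{\Psi})=\nu_p(\B)$, and a standard analysis of the first blow-up yields $\nu_D(\F)=\nu_p(\F)$ if $D$ is non-dicritical and $\nu_D(\F)=\nu_p(\F)+1$ if $D$ is dicritical. Since $\xi_D(\F)=\xi_p(\F)$ by (\ref{def_x}), both cases of the formula then follow from Proposition \ref{prop:Equa-Ba}.

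For the inductive step $k\geq 2$, let $q=q_{k-1}$ be the point blown up by $\pi_k$, and set $\F_q:=\sigma_{k-1}^{*}\F$, with $\hat{\Psi}_q$ the strict transform of $\hat{\Psi}$ at $q$ (Remark \ref{pull}). Factoring local equations of the components $D_i\in V(q)$ out of $\sigma_{k-1}^{*}\omega$ and $\sigma_{k-1}^{*}\hat{\Psi}$, then pulling back by $\pi_k$, one obtains
\[
\nu_D(\F)=\sum_{D_i\in V(q)}\nu_{D_i}(\F)+\nu_D^{(q)}(\F_q),\qquad \nu_D(\hat{\Psi})=\sum_{D_i\in V(q)}\nu_{D_i}(\hat{\Psi})+\nu_q(\hat{\Psi}_q),
\]
where $\nu_D^{(q)}(\F_q)$ denotes the valuation of $\F_q$ along $D$ in its own reduction. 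The base case applied to $\F_q$ at $q$ (where $D$ sits at height $1$) gives $\nu_D^{(q)}(\F_q)=\nu_q(\F_q)+(1-\epsilon)$, with $\epsilon=1$ if $D$ is non-dicritical and $\epsilon=0$ otherwise. Combining this with the recursive definition $\xi_D(\F)=\xi_q(\F_q)+\sum_{D_i\in V(q)}\xi_{D_i}(\F)$ and the inductive hypothesis applied to each $D_i\in V(q)$ (all of height $<k$), the identity claimed at $D$ reduces, after invoking Proposition \ref{prop:Equa-Ba} for $\F_q$, to
\[
\nu_q(\hat{\Psi}_q)=\nu_q(\B_q)-\alpha(q),
\]
where $\B_q$ is any balanced divisor of separatrices for $\F_q$ at $q$ and $\alpha(q)$ counts the non-dicritical components of $V(q)$.

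This last equality will be established via the key structural claim that
\[
\B_q':=(\hat{\Psi}_q)+\sum_{\substack{D_i\in V(q)\\ D_i\text{ non-dicritical}}}D_i
\]
is itself a balanced divisor of separatrices for $\F_q$ at $q$; granting this, $\nu_q(\B_q)=\nu_q(\B_q')=\nu_q(\hat{\Psi}_q)+\alpha(q)$ and we are done. Isolated separatrices of $\F_q$ at $q$ receive coefficient $1$ in $\B_q'$ by construction, so the claim reduces to verifying the balance $\sum_{B\in\sep_q(D')}c_B=2-\val_{\F_q}(D')$ at each dicritical component $D'\subset\D_q$. Here lies the main obstacle, whose resolution depends decisively on the structural constraint --- built into the definition of reduction of singularities --- that \emph{dicritical components may only intersect non-dicritical ones}. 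Consequently, for any dicritical $D'\subset\D_q$, every component of $\D\setminus\D_q$ meeting $D'$ passes through $q$ and is non-dicritical, so $\val_{\F}(D')-\val_{\F_q}(D')$ equals precisely the number of non-dicritical $D_i\in V(q)$ whose strict transforms meet $D'$. Substituting this into the $\F$-balance $\sum_{B'\in\sep_p(D')}a_{B'}=2-\val_{\F}(D')$ and using that $\sep_q(D')$ is the disjoint union of the strict transforms of the $B'\in\sep_p(D')$ and the non-dicritical $D_i\in V(q)$ meeting $D'$ gives exactly the balance required for $\B_q'$, closing the induction.
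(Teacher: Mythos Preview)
Your proof is correct and follows essentially the same inductive strategy as the paper: induct on the height of $D$, express $\nu_D(\F)$ and $\nu_D(\hat\Psi)$ recursively in terms of the data at $q_{k-1}$ and along the components of $V(q_{k-1})$, and reduce via Proposition~\ref{prop:Equa-Ba} to the fact that the strict transform $\hat\Psi_{k-1}$ multiplied by local equations of the non-dicritical components through $q_{k-1}$ is a balanced equation for $\sigma_{k-1}^{*}\F$ at $q_{k-1}$.

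The difference is purely organizational. The paper carries out a case-by-case analysis on $|V(q_{k-1})|\in\{1,2\}$ and on the dicritical/non-dicritical type of each component, and in each case simply \emph{asserts} the form of the balanced equation $\hat\Psi_{q_{k-1}}$. You treat all cases at once through the count $\alpha(q)$ and, more notably, you actually \emph{justify} the claim the paper leaves implicit: your last paragraph checks the balance condition at each dicritical $D'\subset\D_q$ by comparing $\val_{\F}(D')$ with $\val_{\F_q}(D')$, and you correctly pinpoint the structural rule ``a dicritical component meets only non-dicritical components'' as the reason the bookkeeping closes. This is a genuine clarification of a step the paper takes for granted, but the overall architecture of the argument is the same.
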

\begin{proof}
The proof is by induction on the height $k$ of the component $D$ in the blowing-up process. If $D$ is the exceptional projective line arising from the blowing-up at $p$, then $\nu_D(\hat{\Psi})=\nu_p(\hat{\Psi})$ and $\nu_D(\F)=\nu_p(\F)+\epsilon(D)$, where 
\begin{equation}\label{eq_ed}
\epsilon(D)=\begin{cases}
 0 & \text{if $D$ is non-dicritical};
\medskip \\
1 & \text{if $D$ is dicritical}.
\end{cases}
\end{equation}
  It follows by equation (\ref{eq_segundo}) that 
 \begin{equation}\label{eq_t1}
 \nu_D(\hat{\Psi})=\nu_D(\F)+1-\epsilon(D)-\xi_p(\F).
 \end{equation} 
 Since $k=1$, we have $\xi_D(\F)=\xi_p(\F)$ by (\ref{def_x}), and the theorem at height 1 follows from (\ref{eq_t1}). 
Assume that the equality (\ref{eq_teo}) holds for height $k$ and consider the component $D$ obtained by the blowing-up $\pi_{k}$ of the point $q_{k-1}$. Here, we suppose that $\pi=\pi_1\circ\ldots\circ\pi_k\circ\ldots$.
Let $\hat{\Psi}_{k-1}$ be the strict transform of $\hat{\Psi}$ by $\sigma_{k-1}=\pi_{1}\circ\ldots\circ\pi_{k-1}$ (see Remark \ref{pull}). Analyzing the behavior of the valuations of $\F$ and $\hat{\Psi}$ along $D$ by blow-ups, we have
\begin{eqnarray}
\nu_D(\F)&=&\nu_{q_{k-1}}(\sigma_{k-1}^{*}(\F))+\sum_{D_{q_{k-1}}\in V(q_{k-1})}\nu_{D_{q_{k-1}}}(\F)+\epsilon(D),\label{c1}\\
\nu_D(\hat{\Psi})&=&\nu_{q_{k-1}}(\hat{\Psi}_{k-1})+\sum_{D_{q_{k-1}}\in V(q_{k-1})}\nu_{D_{q_{k-1}}}(\hat{\Psi}),\label{c2}
\end{eqnarray}
where $\sigma_{k-1}^{*}(\F)$ denotes the strict transform of the foliation $\F$ by $\sigma_{k-1}$, and
$V(q)$ refers to the set of irreducible components $D_q$ of $\D$ that contain $q$ and $\epsilon(D)$ as above. \\
We distinguish several cases:\\
\noindent{\textit{ Part 1. $V(q_{k-1})$ consists of one component $D_{k-1}$.} }
\begin{enumerate}
\item{\textit{$D_{k-1}$ is non-dicritical.}} Let $\hat{\Psi}_{q_{k-1}}$ be the balanced equation of separatrices for $\sigma_{k-1}^{*}(\F)$ at $q_{k-1}$. Since $q_{k-1}$ is a smooth point of $D_{k-1}$, we have that $\hat{\Psi}_{q_{k-1}}$ is the germ at $q_{k-1}$ given by the product of $\hat{\Psi}_{k-1}$ and of a germ of local equation of $D_{k-1}$, i.e.,  $\hat{\Psi}_{q_{k-1}}=\hat{\Psi}_{k-1}\cdot h_{k-1}$, where $D_{k-1}=\{h_{k-1}=0\}$.
Hence
\begin{eqnarray}
\nu_{q_{k-1}}(\hat{\Psi}_{q_{k-1}})&=&\nu_{q_{k-1}}(\hat{\Psi}_{k-1})+1.\label{c3}
\end{eqnarray}
On the other hand, applying Proposition \ref{prop:Equa-Ba} to $\sigma_{k-1}^{*}(\F)$ at $q_{k-1}$ we have
\begin{eqnarray}
\nu_{q_{k-1}}(\hat{\Psi}_{q_{k-1}})&=&\nu_{q_{k-1}}(\sigma_{k-1}^{*}(\F))+1-\xi_{q_{k-1}}(\sigma_{k-1}^{*}(\F)).\label{c4}
\end{eqnarray}
Then, the relations (\ref{c2}),  (\ref{c3}), (\ref{c4}), (\ref{c1}), and the induction hypothesis imply that
\begin{eqnarray*}
\nu_D(\hat{\Psi})&=& \nu_{q_{k-1}}(\hat{\Psi}_{k-1})+\nu_{D_{k-1}}(\hat{\Psi})\\
&=& \nu_{q_{k-1}}(\hat{\Psi}_{q_{k-1}})-1+\nu_{D_{k-1}}(\hat{\Psi})\\
&=&\nu_{q_{k-1}}(\sigma_{k-1}^{*}(\F))-\xi_{q_{k-1}}(\sigma_{k-1}^{*}(\F))+\nu_{D_{k-1}}(\hat{\Psi})\\
&=&\nu_D(\F)-\epsilon(D)\underbrace{-\nu_{D_{k-1}}(\F)+\nu_{D_{k-1}}(\hat{\Psi})}-\xi_{q_{k-1}}(\sigma_{k-1}^{*}(\F))\\
&=&\nu_D(\F)-\epsilon(D)+1-\xi_{D_{k-1}}(\F)-\xi_{q_{k-1}}(\sigma_{k-1}^{*}(\F)).
\end{eqnarray*}
 We get $\xi_D(\F)=\xi_{D_{k-1}}(\F)+\xi_{q_{k-1}}(\sigma_{k-1}^{*}(\F))$ by (\ref{def-xi}), and so that 
 $\nu_D(\hat{\Psi})=\nu_D(\F)-\epsilon(D)+1-\xi_D(\F)$ proving the result for the component $D$.

\item{\textit{$D_{k-1}$ is dicritical.}} In this case, since $D_{k-1}$ is not $\sigma_{k-1}^{*}(\F)$-invariant, and $q_{k-1}$ is a smooth point of $D_{k-1}$, we have that the balanced equation of separatrices $\hat{\Psi}_{q_{k-1}}$ for $\sigma_{k-1}^{*}(\F)$ at $q_{k-1}$ is given by $\hat{\Psi}_{q_{k-1}}=\hat{\Psi}_{k-1}$. Hence
\begin{eqnarray}
\nu_{q_{k-1}}(\hat{\Psi}_{q_{k-1}})&=&\nu_{q_{k-1}}(\hat{\Psi}_{k-1}).\label{c5}
\end{eqnarray}
Then, the relations (\ref{c2}), (\ref{c5}), (\ref{c4}), (\ref{c1}), the induction hypothesis, and (\ref{def-xi}) give the result for the component $D$. In fact
\begin{eqnarray*}
\nu_D(\hat{\Psi})&=& \nu_{q_{k-1}}(\hat{\Psi}_{q_{k-1}})+\nu_{D_{k-1}}(\hat{\Psi})\\
&=& \nu_{q_{k-1}}(\hat{\Psi}_{k-1})+\nu_{D_{k-1}}(\hat{\Psi})\\
&=&\nu_{q_{k-1}}(\sigma_{k-1}^{*}(\F))+1-\xi_{q_{k-1}}(\sigma_{k-1}^{*}(\F))+\nu_{D_{k-1}}(\hat{\Psi})\\
&=&\nu_D(\F)-\epsilon(D)+1-\xi_{q_{k-1}}(\sigma_{k-1}^{*}(\F))+\underbrace{\nu_{D_{k-1}}(\hat{\Psi})-\nu_{D_{k-1}}(\F)}\\
&=&\nu_D(\F)-\epsilon(D)+1\underbrace{- \xi_{q_{k-1}}(\sigma_{k-1}^{*}(\F))-\xi_{D_{k-1}}(\F)}\\
&=&\nu_D(\F)-\epsilon(D)+1-\xi_D(\F).
\end{eqnarray*}
\end{enumerate}
\noindent{\textit{ Part 2. $V(q_{k-1})$ consists of two components $D_{k-1}$ and $D^{1}_{k-1}$.}} 
\begin{enumerate}
\item{\textit{$D_{k-1}$ and $D^{1}_{k-1}$ are non-dicritical.}}
In this case, $q_{k-1}\in D_{k-1}\cap D^{1}_{k-1}$ is a corner point, and so that the balanced equation of separatrices $\hat{\Psi}_{q_{k-1}}$ for $\sigma_{k-1}^{*}(\F)$ at $q_{k-1}$ is the product of  $\hat{\Psi}_{k-1}$ and of a germ of local equation for $D_{k-1}\cup D^1_{k-1}$, i.e.,  $\hat{\Psi}_{q_{k-1}}=\hat{\Psi}_{k-1}\cdot h_{k-1}\cdot \phi_{k-1}$, where $D_{k-1}=\{h_{k-1}=0\}$ and $D^1_{k-1}=\{\phi_{k-1}=0\}$.
Hence, 
\begin{eqnarray}
\nu_{q_{k-1}}(\hat{\Psi}_{q_{k-1}})&=&\nu_{q_{k-1}}(\hat{\Psi}_{k-1})+2.\label{c7}
\end{eqnarray}
Then, the relations (\ref{c2}), (\ref{c7}), (\ref{c4}), (\ref{c1}) and the induction hypothesis imply that
\begin{eqnarray}
\nu_D(\hat{\Psi})&=& \nu_{q_{k-1}}(\hat{\Psi}_{k-1})+\nu_{D_{k-1}}(\hat{\Psi})+\nu_{D^{1}_{k-1}}(\hat{\Psi})\nonumber\\
&=& \nu_{q_{k-1}}(\hat{\Psi}_{q_{k-1}})-2+\nu_{D_{k-1}}(\hat{\Psi})+\nu_{D^{1}_{k-1}}(\hat{\Psi})\nonumber\\
&=&\nu_{q_{k-1}}(\sigma_{k-1}^{*}(\F))+1-\xi_{q_{k-1}}(\sigma_{k-1}^{*}(\F))-2+\nu_{D_{k-1}}(\hat{\Psi})+\nu_{D^{1}_{k-1}}(\hat{\Psi})\nonumber\\
&=&\nu_D(\F)-\epsilon(D)-1-\xi_{q_{k-1}}(\sigma_{k-1}^{*}(\F))\nonumber\\
& & +\nu_{D_{k-1}}(\hat{\Psi})-\nu_{D_{k-1}}(\F)+\nu_{D^{1}_{k-1}}(\hat{\Psi})-\nu_{D^{1}_{k-1}}(\F)\nonumber\\
&=&\nu_D(\F)-\epsilon(D)-1-\xi_{q_{k-1}}(\sigma_{k-1}^{*}(\F))\nonumber\\
& & +\left(1-\xi_{D_{k-1}}(\F)\right)+\left(1-\xi_{D_{k-1}^{1}}(\F)\right)\nonumber\\
&=&\nu_D(\F)-\epsilon(D)+1\nonumber\\
& & -\left(\xi_{q_{k-1}}(\sigma_{k-1}^{*}(\F))+\xi_{D_{k-1}}(\F)+\xi_{D_{k-1}^{1}}(\F)\right)\label{eq_45}.
\end{eqnarray}
 Thus, the result follows from (\ref{eq_45}) and (\ref{def-xi}):
\begin{eqnarray*}
\nu_D(\hat{\Psi})
&=& \nu_D(\F)-\epsilon(D)+1-\xi_D(\F).
\end{eqnarray*}
\item{\textit{$D_{k-1}$ is dicritical and $D^{1}_{k-1}$ is non-dicritical.}}
This case can be handled similarly.
\item{\textit{$D_{k-1}$ and $D^{1}_{k-1}$ are dicritical.}} 
In this case, $q_{k-1}\in D_{k-1}\cap D^{1}_{k-1}$ is a corner point, and 
since $D_{k-1}$ and $D^{1}_{k-1}$ are dicritical, we have the balanced equation of separatrices $\hat{\Psi}_{q_{k-1}}$ for $\sigma_{k-1}^{*}(\F)$ at $q_{k-1}$ is  $\hat{\Psi}_{k-1}$.
Hence, 
\begin{eqnarray}
\nu_{q_{k-1}}(\hat{\Psi}_{q_{k-1}})&=&\nu_{q_{k-1}}(\hat{\Psi}_{k-1}).\label{c13}
\end{eqnarray}
Then, the relations (\ref{c2}), (\ref{c13}), (\ref{c4}), (\ref{c1}) and the induction hypothesis imply that
\begin{eqnarray}
\nu_D(\hat{\Psi})&=& \nu_{q_{k-1}}(\hat{\Psi}_{k-1})+\nu_{D_{k-1}}(\hat{\Psi})+\nu_{D^{1}_{k-1}}(\hat{\Psi})\nonumber\\
&=& \nu_{q_{k-1}}(\hat{\Psi}_{q_{k-1}})+\nu_{D_{k-1}}(\hat{\Psi})+\nu_{D^{1}_{k-1}}(\hat{\Psi})\nonumber\\
&=&\nu_{q_{k-1}}(\sigma_{k-1}^{*}(\F))+1-\xi_{q_{k-1}}(\sigma_{k-1}^{*}(\F))+\nu_{D_{k-1}}(\hat{\Psi})+\nu_{D^{1}_{k-1}}(\hat{\Psi})\nonumber\\
&=&\nu_D(\F)-\epsilon(D)+1-\xi_{q_{k-1}}(\sigma_{k-1}^{*}(\F))\nonumber\\
& & +\nu_{D_{k-1}}(\hat{\Psi})-\nu_{D_{k-1}}(\F)+\nu_{D^{1}_{k-1}}(\hat{\Psi})-\nu_{D^{1}_{k-1}}(\F)\nonumber\\
&=&\nu_D(\F)-\epsilon(D)+1-\xi_{q_{k-1}}(\sigma_{k-1}^{*}(\F))-\xi_{D_{k-1}}(\F)-\xi_{D_{k-1}^{1}}(\F)\nonumber\\
&=&\nu_D(\F)-\epsilon(D)+1\nonumber\\
& & -\left(\xi_{q_{k-1}}(\sigma_{k-1}^{*}(\F))+\xi_{D_{k-1}}(\F)+\xi_{D_{k-1}^{1}}(\F)\right)\label{eq_45}.
\end{eqnarray}
 Thus, the result follows from (\ref{eq_45}) and (\ref{def-xi}):
\begin{eqnarray*}
\nu_D(\hat{\Psi})
&=& \nu_D(\F)-\epsilon(D)+1-\xi_D(\F).
\end{eqnarray*}

\end{enumerate}
\end{proof}
\par In order to illustrate Theorem \ref{teo1} we consider the family of dicritical foliations given in \cite[Example 6.5]{FP-GB-SM2021}. 
\begin{example}
Let $\lambda\in\C$ and $k\geq 3$ integer. Let $\F_k$ be the singular foliation at $(\C^2,0)$ defined by
\[\omega_k=y(2x^{2k-2}+2(\lambda+1)x^2y^{k-2}-y^{k-1})dx+x(y^{k-1}-(\lambda+1)x^2y^{k-2}-x^{2k-2})dy.\]
The foliation $\F_k$ is  dicritical and is not of second type. After one blow-up $\pi_1$, appears a dicritical component $D_1=\pi_1^{-1}(0)$ and the strict transform of $\F_k$ by $\pi_1$ has a unique non-reduced singularity $q\in D_1$.  A further blow-up $\pi_2$ applied to $q$ produces a non-dicritical component $D_2=\pi_2^{-1}(q)$, and the reduction of singularities of $\F_k$ is achieved. 
A balanced equation of separatrices for $\F_k$ is $\hat{\Psi}(x,y)=xy$.
Moreover, a straightforward calculation reveals that
\[\nu_{D_1}(\F_k)=k+1,\,\,\, \xi_0(\F_k)=k-1,\,\,\, \nu_{D_{1}}(\hat{\Psi})=2,\]
\[\nu_{D_2}(\F_k)=2k,\,\,\, \xi_q(\F_k)=k-1,\,\,\, \nu_{D_{2}}(\hat{\Psi})=3.\]
Therefore, $\xi_{D_1}(\F_k)=k-1$ and $\xi_{D_2}(\F_k)=2k-2$. Since
 \[2=\nu_{D_{1}}(\hat{\Psi})=\nu_{D_1}(\F_k)-\xi_{D_1}(\F_k)=k+1-(k-1)\]
 and 
 \[3=\nu_{D_{2}}(\hat{\Psi})=\nu_{D_2}(\F_k)+1-\xi_{D_2}(\F_k)=2k+1-(2k-2).\] 
 We verify that Theorem \ref{teo1} holds.
\end{example}
\par As a consequence of \cite[Theorem 3]{CLS}, we have $\nu_p(\F)\geq\nu_p(\Gamma)-1$ if the foliation $\F$ is non-dicritical at $p\in\C^2$, and $\Gamma$ is the union of all separatrices through $p$. Then, since $\xi_D(\F)\geq 0$, we can obtain a similar inequality for $\nu_D(\F)$ and $\nu_D(\hat{\Psi})$.
\begin{corollary}
Let $\F$ be a germ of a singular foliation at $p\in\C^2$ having $\hat{\Psi}$ as a balanced equation of separatrices. Let $\pi:(\tilde{X},\D)\to(\C^2,p)$ be a minimal process of reduction of singularities for $\F$. Then for every component $D\subset\D$, we have
\[\nu_D(\hat{\Psi})-1+\epsilon(D)\leq\nu_D(\F),\]
where $\epsilon(D)$ is as in (\ref{eq_ed}).
\end{corollary}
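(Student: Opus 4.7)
The plan is to derive the corollary as an immediate consequence of Theorem \ref{teo1}, by merging its two cases into a single identity using $\epsilon(D)$ and then invoking the non-negativity of the tangency excess $\xi_D(\F)$.

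First, I would observe that the two formulas in Theorem \ref{teo1} can be unified. When $D$ is non-dicritical, $\epsilon(D)=0$ and the theorem gives $\nu_D(\hat{\Psi})=\nu_D(\F)+1-\xi_D(\F)$; when $D$ is dicritical, $\epsilon(D)=1$ and the theorem gives $\nu_D(\hat{\Psi})=\nu_D(\F)-\xi_D(\F)$. In both situations these equalities combine into
\[
\nu_D(\hat{\Psi})=\nu_D(\F)+1-\epsilon(D)-\xi_D(\F),
\]
which rearranges to $\nu_D(\hat{\Psi})-1+\epsilon(D)=\nu_D(\F)-\xi_D(\F)$.

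Next, I would verify that $\xi_D(\F)\geq 0$. By Definition \ref{defisecond}, any local tangency excess $\xi_q(\G)$ is a finite sum of terms $\rho(D_q)\bigl(\text{Ind}_q^w(\tilde{\G})-1\bigr)$, each of which is non-negative since $\rho(D_q)>0$ and the weak index of a tangent saddle-node satisfies $\text{Ind}_q^w(\tilde{\G})>1$. An immediate induction on the height of $D$, using the recursive formula (\ref{def-xi}), then shows that $\xi_D(\F)$ is itself a sum of such non-negative contributions, hence $\xi_D(\F)\geq 0$.

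Combining the displayed identity with $\xi_D(\F)\geq 0$ yields $\nu_D(\hat{\Psi})-1+\epsilon(D)\leq \nu_D(\F)$, as required. I do not anticipate a genuine obstacle: the entire content of the corollary is absorbed in Theorem \ref{teo1}, and the only non-bookkeeping remark is the positivity of $\xi_D(\F)$, which is essentially built into its definition via the weak index of tangent saddle-nodes. Note in particular that the bound by Camacho--Lins Neto--Sad recalled in the paragraph preceding the corollary is only used as motivation and is not invoked in the argument.
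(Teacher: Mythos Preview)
Your proposal is correct and follows the same approach as the paper: the corollary is stated as an immediate consequence of Theorem~\ref{teo1} together with the observation that $\xi_D(\F)\geq 0$. Your added justification of $\xi_D(\F)\geq 0$ by induction on the height of $D$ simply spells out what the paper asserts in one line.
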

\section{A remark on projective foliations}
\label{projective}
\par In this section, under certain assumptions, we derive inequalities involving the valuation, tangency excess, and degree of a holomorphic foliation $\F$ on the complex projective plane $\mathbb{P}^{2}_\C$. Given that these invariants are related to the algebraic multiplicity $\nu_p(\F)$, we believe they hold independent interest; see, for instance, \cite{Jose} and \cite[Section 3]{oziel}.
\par A holomorphic foliation $\F$ on $\mathbb{P}^{2}_{\C}$ of degree $d\geq 0$ is a foliation defined by a polynomial 1-form $\Omega=A(x,y,z)dx+B(x,y,z)dy+C(x,y,z)dz,$ where 
$A,B,C$ are complex homogeneous polynomials of degree $d+1$, satisfying two conditions:
\begin{enumerate}
    \item the integrability condition $\Omega\wedge d\Omega =0$,
    \item the Euler condition $Ax+By+Cz=0$.
\end{enumerate}
The singular set of $\F$ is by definition 
\[\sing(\F)=\{p\in\mathbb{P}^2_{\C}:A(p)=B(p)=C(p)=0\}.\]

According to Mendes-Sad \cite[page 94]{mendes}, an analytic curve $S$ contained in a complex surface $M$ is said to be a \textit{special invariant curve} for a holomorphic foliation $\G$ if $S$ is smooth, isomorphic to $\mathbb{P}^1_{\C}$, $\G$-invariant, $\sing(\G)\cap S=\{q\}$ and there are local coordinates for which $q=(0,0)$ and $f(x,y)=x^m\cdot y$ $(m\in\mathbb{N})$ is a local holomorphic first integral for $\G$.
\par Now, we consider a holomorphic foliation $\F$ of $\mathbb{P}^2_{\C}$ of degree $d\geq 1$, and a minimal process of reduction of singularities $\pi:(\tilde{X},\D)\to\mathbb{P}^2_{\C}$ for $\F$ such that the strict transform foliation of $\F$ by $\pi$ is free   of special invariant curves. Then, it follows from Mendes-Sad's Theorem \cite[Theorem 1]{mendes} that 
\begin{equation}\label{eq_sad}
\sum_{p\in\sing(\F)}\sum_{D\subset\D}(\nu_D(\F_p)-1)^2\leq (d-1)^2,
\end{equation}
where $\F_p$ is the germ of $\F$ at $p\in\sing(\F)$. 

We have the following corollary.
\begin{corollary}
Let $\F$ be a holomorphic foliation of $\mathbb{P}^2_{\C}$ of degree $d\geq 1$, and let $\pi:(\tilde{X},\D)\to\mathbb{P}^2_{\C}$ be a minimal process of reduction of singularities for $\F$. For each $p\in\sing(\F)$, let $\hat{\Psi}_p$ be a balanced equation of separatrices for the germ $\F_p$ of $\F$ at $p$.  Suppose that the strict transform foliation of $\F$ by $\pi$ is free   of special invariant curves. Then
 \begin{eqnarray}\label{eq_final}
     & &\sum_{p\in\sing(\F)}
    \sum_{D\subset\D\,\,\text{non-dicritical}}(\nu_D(\hat{\Psi}_p)+\xi_D(\F_p)-2)^2  \nonumber
     \\ 
     & & +
    \sum_{p\in\sing(\F)}
    \sum_{D\subset\D\,\,\text{dicritical}}(\nu_D(\hat{\Psi}_p)+\xi_D(\F_p)-1)^2 \leq (d-1)^2
\end{eqnarray}
\end{corollary}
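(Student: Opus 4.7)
The plan is to derive this corollary as a direct consequence of Theorem \ref{teo1} and the Mendes-Sad inequality (\ref{eq_sad}). The strategy is to use Theorem \ref{teo1} to rewrite $\nu_D(\F_p)$ in terms of $\nu_D(\hat{\Psi}_p)$ and $\xi_D(\F_p)$, then substitute into (\ref{eq_sad}) termwise.

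First, I would fix $p\in\sing(\F)$ and a component $D\subset\D$ appearing over $p$, and split into cases according to whether $D$ is non-dicritical or dicritical. By Theorem \ref{teo1}, in the non-dicritical case we have
\begin{equation*}
\nu_D(\F_p)=\nu_D(\hat{\Psi}_p)+\xi_D(\F_p)-1,
\end{equation*}
so that
\begin{equation*}
(\nu_D(\F_p)-1)^2=(\nu_D(\hat{\Psi}_p)+\xi_D(\F_p)-2)^2,
\end{equation*}
while in the dicritical case
\begin{equation*}
\nu_D(\F_p)=\nu_D(\hat{\Psi}_p)+\xi_D(\F_p),
\end{equation*}
so that
\begin{equation*}
(\nu_D(\F_p)-1)^2=(\nu_D(\hat{\Psi}_p)+\xi_D(\F_p)-1)^2.
\end{equation*}

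Next, since the reduction $\pi$ is assumed to produce a strict transform free of special invariant curves, the hypotheses of \cite[Theorem 1]{mendes} are in force and the inequality (\ref{eq_sad}) holds for $\F$. Splitting the inner sum over components $D\subset\D$ according to the dicritical/non-dicritical dichotomy and substituting the two identities above yields exactly the left-hand side of (\ref{eq_final}), which is therefore bounded above by $(d-1)^2$.

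There is essentially no obstacle to this argument once Theorem \ref{teo1} is available; the only point that deserves a brief remark is that, in (\ref{eq_sad}), the outer sum ranges over $p\in\sing(\F)$ while the inner sum ranges over components $D\subset\D$, and one should observe that $\nu_D(\F_p)=0$ whenever $D$ does not lie over $p$, so that only components of the exceptional divisor over $p$ contribute nontrivially and the partition into dicritical and non-dicritical components may be applied separately at each singular point.
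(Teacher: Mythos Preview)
Your proposal is correct and follows essentially the same approach as the paper: split the Mendes--Sad inequality into non-dicritical and dicritical components of $\mathcal{D}$ and substitute the expressions for $\nu_D(\mathcal{F}_p)$ coming from Theorem~\ref{teo1}. The paper's proof is in fact even terser than yours; your additional remark about components not lying over $p$ is a harmless clarification.
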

\begin{proof}
Dividing the sum (\ref{eq_sad}) in two parts (non-dicritical and dicritical component of $\D$) we get
\begin{eqnarray*}
   & &\sum_{p\in\sing(\F)}\sum_{D\subset\D\,\,\text{non-dicritical}}(\nu_D(\F_p)-1)^2\\
    & & +\sum_{p\in\sing(\F)}
    \sum_{D\subset\D\,\,\text{dicritical}}(\nu_D(\F_p)-1)^2\leq (d-1)^2.
    \end{eqnarray*}
    The inequality (\ref{eq_final}) follows applying Theorem \ref{teo1} to $\nu_D(\F_p)$.
\end{proof}

\subsection*{Acknowledgements}
The first-named author acknowledges support from CNPq Projeto Universal 408687/2023-1 ``Geometria das Equa\c{c}\~oes Diferenciais Alg\'ebricas" and CNPq-Brazil PQ-306011/2023-9. This work was funded by the Direcci\'on de Fomento de la Investigaci\'on at the PUCP through grant DFI-2023-PI0979.  Finally, the authors thank the anonymous referee, whose remarks allowed them to improve the presentation.


\normalsize

\end{document}